\documentclass[]{theclass} 
\usepackage{graphicx, xfrac, lineno, float, subcaption, tasks, comment, xcolor, booktabs, multirow}
\usepackage[T1]{fontenc}
\usepackage[normalem]{ulem}
\usepackage{datetime}
\usepackage{colortbl}
\usepackage{enumitem}


\begin{document}
\begin{frontmatter}

\titledata{Disjoint odd circuits in a bridgeless cubic graph can be quelled by a single perfect matching}{To our loved ones \\ The authors were partially supported by VEGA 1/0743/21, VEGA 1/0727/22, and APVV-19-0308. The results described in this paper were obtained during the workshop Exceptional Structures in Discrete Mathematics 4 in Modra, Slovakia.}  

\authordata{Franti\v{s}ek Kardo\v{s}}
{LaBRI, CNRS, University of Bordeaux, Talence, F-33405, France; \\ Department of Computer Science, Faculty of Mathematics, Physics and Informatics\\ Comenius University, Mlynsk\'{a} Dolina, 842 48 Bratislava, Slovakia}
{frantisek.kardos@u-bordeaux.fr}{}
\authordata{Edita M\'{a}\v{c}ajov\'{a}}
{Department of Computer Science, Faculty of Mathematics, Physics and Informatics\\ Comenius University, Mlynsk\'{a} Dolina, 842 48 Bratislava, Slovakia}
{macajova@dcs.fmph.uniba.sk}{}
\authordata{Jean Paul Zerafa}
{St. Edward's College, Triq San Dwardu\\ Birgu (Citt\`{a} Vittoriosa), BRG 9039, Cottonera, Malta; \\ Department of Computer Science, Faculty of Mathematics, Physics and Informatics\\ Comenius University, Mlynsk\'{a} Dolina, 842 48 Bratislava, Slovakia}
{zerafa.jp@gmail.com}
{}

\keywords{factor, perfect matching, circuit, cubic graph, snark, $S_{4}$-Conjecture, Fan--Raspaud Conjecture, Berge--Fulkerson Conjecture}
\msc{05C15, 05C70}

\begin{abstract}
Let $G$ be a bridgeless cubic graph. The Berge--Fulkerson Conjecture (1970s) states that $G$ admits a list of six perfect matchings such that each edge of $G$ belongs to exactly two of these perfect matchings. If answered in the affirmative, two other recent conjectures would also be true: the Fan--Raspaud Conjecture (1994), which states that $G$ admits three perfect matchings such that every edge of $G$ belongs to at most two of them; and a conjecture by Mazzuoccolo (2013), which states that $G$ admits two perfect matchings whose deletion yields a bipartite subgraph of $G$. It can be shown that given an arbitrary perfect matching of $G$, it is not always possible to extend it to a list of three or six perfect matchings satisfying the statements of the Fan--Raspaud and the Berge--Fulkerson conjectures, respectively. In this paper, we show that given any $1^+$-factor $F$ (a spanning subgraph of $G$ such that its vertices have degree at least 1) and an arbitrary edge $e$ of $G$, there always exists a perfect matching $M$ of $G$ containing $e$ such that $G\setminus (F\cup M)$ is bipartite. Our result implies Mazzuoccolo's conjecture, but not only. It also implies that given any collection of disjoint odd circuits in $G$, there exists a perfect matching of $G$ containing at least one edge of each circuit in this collection. 
\end{abstract}

\end{frontmatter}

\section{Introduction}

The behaviour of perfect matchings in cubic graphs is amongst the most well-studied themes in graph theory. One of the first classical results was made by Petersen in 1891 \cite{petersen}, who showed that every bridgeless cubic graph admits a perfect matching. These graphs not only do admit a perfect matching, but in 2011, one of the most prominent conjectures about perfect matchings in bridgeless cubic graphs was completely solved by Esperet \emph{et al.} \cite{EsperetLovaszPlummer}. The conjecture, proposed by Lov\'{a}sz and Plummer in the 1970s, stated that the number of perfect matchings in a bridgeless cubic graph grows exponentially with its order (see \cite{LovaszPlummer}). Another conjecture which has baffled mathematicians for more than 50 years is the following.

\begin{conjecture}[Fulkerson, 1971 \cite{BergeFulkerson}]\label{conjecture fulkerson}
Every bridgeless cubic graph $G$ admits six perfect matchings such that each edge in $G$ is contained in exactly two of these six perfect matchings.
\end{conjecture}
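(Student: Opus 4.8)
The plan is to derive Conjecture~\ref{conjecture fulkerson} from standard structural reductions together with an iterative construction of the six perfect matchings that is driven by the $1^+$-factor result stated in the abstract. For the reductions: if $G$ is $3$-edge-colourable, its three colour classes are perfect matchings $M_1,M_2,M_3$ partitioning $E(G)$, and the list $M_1,M_1,M_2,M_2,M_3,M_3$ covers every edge exactly twice, so we may assume $G$ is not $3$-edge-colourable. Splitting $G$ along a $2$-edge-cut or a nontrivial $3$-edge-cut yields smaller bridgeless cubic graphs whose covers can be pasted together once they are made to agree on the cut edges; hence, arguing by induction, we may assume $G$ is cyclically $4$-edge-connected, and with a little more care that its girth is at least $5$, i.e.\ that $G$ is a snark.

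A cover as in Conjecture~\ref{conjecture fulkerson} amounts to a decomposition of the $6$-regular multigraph obtained by doubling every edge of $G$ into six perfect matchings of $G$, and I would try to build it greedily. Choose a perfect matching $M_1$; its complement is a $2$-factor, and only the \emph{odd} circuits of this $2$-factor cause trouble, since along an even circuit both alternations are available (and, $G$ being a snark, at least one odd circuit is necessarily present). The theorem announced in the abstract is tailored to exactly this situation: encoding the commitments already made by the matchings chosen so far as a $1^+$-factor $F$ (note that the union of any family of perfect matchings of $G$ is a $1^+$-factor), it produces a perfect matching $M$ through any prescribed edge with $G\setminus(F\cup M)$ bipartite --- equivalently, $M$ meets every leftover odd circuit and annihilates the parity defect of what remains. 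One would take $M_2$ to be such a matching, recompute the odd circuits, apply the quelling step again, and iterate, all the while keeping a ledger of how many times each edge has been selected so as never to exceed two.

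The candid assessment is that Conjecture~\ref{conjecture fulkerson} is one of the most famous open problems in graph theory, so no instance of this plan is presently known to close. The quelling lemma destroys the odd circuits \emph{once}; what is missing is a global mechanism guaranteeing that six mutually compatible rounds exist and --- above all --- that the ``each edge at most twice'' budget survives all six of them: controlling the overlaps of the successive perfect matchings, and proving that the procedure terminates after exactly six steps rather than running on, is the genuine difficulty. What the paper does contribute is precisely that single-step tool, which is already strong enough to imply both Mazzuoccolo's conjecture and the Fan--Raspaud conjecture; promoting it to a full proof of Fulkerson's conjecture remains out of reach.
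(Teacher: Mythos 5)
The statement you were asked about is Conjecture~\ref{conjecture fulkerson}, the Berge--Fulkerson Conjecture; the paper does not prove it and does not claim to. It is stated as background motivation, and the paper's actual contribution is the much weaker ``quelling pair'' theorem, which implies Mazzuoccolo's $S_4$-Conjecture but neither the Fan--Raspaud Conjecture nor Fulkerson's. So there is no proof in the paper against which to compare yours, and your own candid assessment is correct: what you have written is not a proof.

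To name the gap concretely, since that is what you also identify: your reduction steps are fine (the $3$-edge-colourable case, and pasting Fulkerson covers across $2$-edge-cuts and nontrivial $3$-edge-cuts, are standard and sound), but the iterative construction collapses at the bookkeeping stage. The quelling theorem produces, for a given $1^+$-factor $F$, \emph{one} perfect matching $M$ with $G\setminus(F\cup M)$ bipartite; it says nothing about how $M$ intersects $F$ edge by edge, so the ``ledger'' constraint that no edge be chosen more than twice cannot be enforced --- the theorem lets you prescribe a single edge of $M$, not forbid a set of edges from $M$. Moreover, a Fulkerson cover requires every edge to be covered \emph{exactly} twice, a global exact-counting condition equivalent to a fractional-perfect-matching decomposition of weight $1/3$ on each edge; no greedy sequence of quelling steps is known to produce this, and indeed even the existence of \emph{three} perfect matchings with empty common intersection (Fan--Raspaud) does not follow from the paper's result. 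Your proposal is best read as an accurate explanation of why the paper's theorem falls short of Conjecture~\ref{conjecture fulkerson}, not as a proof of it.
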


Such a list of six perfect matchings shall be referred to as a \emph{Fulkerson cover}. Although initially stated by Fulkerson, Conjecture \ref{conjecture fulkerson} is also attributed to Berge and has been widely referred to as the Berge--Fulkerson Conjecture. In hindsight, such a name was more than appropriate as Fulkerson's conjecture was actually shown to be equivalent to the following seemingly weaker conjecture made by Berge (see \cite{MazzuoccoloEquivalence} for more details).

\begin{conjecture}[Berge, unpublished]\label{conjecture berge}
Every bridgeless cubic graph $G$ admits five perfect matchings such that every edge in $G$ is contained in at least one of these five perfect matchings.
\end{conjecture}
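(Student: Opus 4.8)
Since this is Berge's long-standing open conjecture, what follows is a plan of attack rather than a finished argument. The first move is to invoke the equivalence established in \cite{MazzuoccoloEquivalence}: a bridgeless cubic graph admits five perfect matchings covering every edge if and only if it admits a Fulkerson cover, so it is enough to produce six perfect matchings covering each edge exactly twice (and, conversely, proving the five-cover statement for every bridgeless cubic graph settles Conjecture~\ref{conjecture fulkerson} as well). Next I would reduce to the connected, cyclically $4$-edge-connected case: both formulations behave well under splitting along a $2$-edge-cut or a non-trivial $3$-edge-cut, so by a standard induction on $|V(G)|$ one may assume $G$ is cyclically $4$-edge-connected, and in fact a snark, since the $3$-edge-colourable case is trivial -- three perfect matchings then cover $E(G)$ and one repeats two of them to reach five.

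For a snark $G$, fix a perfect matching $M_{1}$, which exists by Petersen's theorem \cite{petersen}, so that $H_{1}:=G\setminus M_{1}$ is a $2$-factor, a disjoint union of circuits at least one of which is odd. The even circuits of $H_{1}$ cause no trouble (alternate edges), so the real task is to cover the edges lying on the odd circuits. Here the plan is a potential-decreasing iteration: while some edge $f$ is still uncovered, let $F$ be the union of the perfect matchings already built, apply the main theorem of this paper to $F$ and $f$ to obtain a perfect matching $M\ni f$ with $G\setminus(F\cup M)$ bipartite -- equivalently, via the stated corollary, a perfect matching containing $f$ and meeting every odd circuit of the residual graph, which is legitimate since those circuits sit inside a $2$-factor and are vertex-disjoint -- add $M$ to the list, and note that the number of uncovered edges strictly drops. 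The bipartiteness condition is meant precisely to keep the residual parity obstruction under control and to stop the iteration from stalling.

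The crux -- and the reason the statement is still a conjecture -- is the bookkeeping: one must show that this iteration terminates after at most five steps (equivalently, that six perfect matchings suffice for a Fulkerson cover), and this requires bounding how badly the successive matchings overlap. A naive estimate permits far more than five matchings, and no uniform bound is known; obtaining one seems to need either a clever rule forcing each new $M$ to agree with the previously chosen matchings on a prescribed large set of edges, or a global fractional/flow argument that packs all the required matchings simultaneously rather than greedily. Supplying such a bound is exactly the open part of the problem, and that is where I would expect the genuine difficulty to lie.
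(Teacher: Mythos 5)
There is a genuine gap, and you have correctly located it yourself: no argument in your proposal ever produces the bound of five. The statement you are asked about is an \emph{open conjecture}; the paper does not prove it (nor Conjecture~\ref{conjecture fulkerson}), it only proves results that are strictly weaker consequences, so there is no ``paper's proof'' to match. Your reduction steps (equivalence with the Fulkerson cover via \cite{MazzuoccoloEquivalence}, splitting along small cyclic cuts, dismissing the $3$-edge-colourable case) are all sound and standard, but the iterative core does not get closer to the conjecture. Each application of the main theorem covers the chosen uncovered edge $f$, so the iteration terminates after at most $|E(G)|$ steps; this only re-proves the classical fact that every edge of a bridgeless cubic graph lies in some perfect matching, hence that \emph{some} finite list of perfect matchings covers $E(G)$. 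Nothing in the quelling-pair conclusion controls how the successive matchings overlap, so the bipartiteness of $G\setminus(F\cup M)$ buys you nothing toward the count of five: for the purpose of covering edges, the main theorem is being used only for the clause ``$e\in M$'', which is much older than this paper.

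Concretely, the missing idea is a packing (rather than greedy) argument: one would need to choose the five matchings simultaneously so that their multiset of multiplicities on each edge is controlled, which is exactly what the fractional relaxation (Edmonds' matching polytope gives $\tfrac{1}{3}(M_1+\dots+M_k)\to \mathbf{1}$ fractionally) achieves but no integral rounding is known to achieve. Since you state plainly that supplying this bound is the open part, your text is an honest research plan, not a proof, and should not be presented as resolving Conjecture~\ref{conjecture berge}.
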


In the sequel, in order to avoid confusion, the Berge--Fulkerson Conjecture shall refer exclusively to the statement by Fulkerson, which, despite being quite simple and uncomplicated, remains widely open. As the years went by, in an attempt to advance in this direction, weaker assertions started to be considered. Such an example is a conjecture by Fan and Raspaud.

\begin{conjecture}[\mbox{Fan--Raspaud}, 1994 \cite{FanRaspaud}]\label{conjecture FR}
Every bridgeless cubic graph admits three perfect matchings $M_{1}, M_{2},M_{3}$ such that $M_{1}\cap M_{2}\cap M_{3}=\emptyset$.
\end{conjecture}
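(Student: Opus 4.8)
The plan is to reduce Conjecture~\ref{conjecture FR} to snarks and then reformulate it through alternating subgraphs of a fixed perfect matching. First I would dispose of the trivial case: if $G$ is $3$-edge-colourable, then its three colour classes are pairwise disjoint perfect matchings, so their triple intersection is empty. Moreover, by the standard reductions of a bridgeless cubic graph along its $2$-edge-cuts and its non-trivial $3$-edge-cuts, it ought to suffice to prove the statement for cyclically $4$-edge-connected snarks of girth at least $5$; I would check that, given an empty-triple-intersection triple of perfect matchings in each piece of such a decomposition, the three matchings can be re-glued across the cut. So from now on $G$ is a snark, and by Petersen's theorem it has a perfect matching $M_{1}$ whose complementary $2$-factor $C=E(G)\setminus M_{1}$ contains an odd circuit.

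Next I would reformulate the problem. Every perfect matching $M_{2}$ of $G$ equals $M_{1}\triangle D_{2}$ for a (possibly empty) disjoint union $D_{2}$ of circuits, each alternating between $M_{1}$ and $C$; conversely $M_{1}\triangle D$ is a perfect matching for any such \emph{$M_{1}$-alternating even subgraph} $D$. Since $M_{1}\cap(M_{1}\triangle D)=M_{1}\setminus D$, setting $M_{2}=M_{1}\triangle D_{2}$ and $M_{3}=M_{1}\triangle D_{3}$ gives $M_{1}\cap M_{2}\cap M_{3}=M_{1}\setminus(D_{2}\cup D_{3})$. Hence Conjecture~\ref{conjecture FR} is equivalent to the following: \emph{every bridgeless cubic graph has a perfect matching $M_{1}$ together with two $M_{1}$-alternating even subgraphs $D_{2},D_{3}$ whose union contains every edge of $M_{1}$.} A single such $D$ covering all of $M_{1}$ exists only when $C$ has a perfect matching, i.e.\ only when $G$ is $3$-edge-colourable; so for a snark one genuinely needs two alternating subgraphs, and neither of them may contain all of $M_{1}$. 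Concretely I would try to split $M_{1}=P_{2}\sqcup P_{3}$ so that each $P_{i}$ is realisable as the set of $M_{1}$-edges of an $M_{1}$-alternating even subgraph. Contracting each circuit of $C$ turns $M_{1}$ into a multigraph $H$ on the circuits of $C$, and the realisability of a set $P\subseteq M_{1}$ then becomes a parity/Eulerian-type condition in $H$ coupled with a non-crossing consistency requirement along each circuit of $C$ (governing which $M_{1}$-edges are allowed to be consecutive on an alternating circuit).

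The main obstacle is exactly this last requirement. For an arbitrary snark the circuit structure of $C$, and the way the $M_{1}$-edges weave through it, can be arbitrarily intricate, so producing the desired two-part split of $M_{1}$ --- or choosing $M_{1}$ so that it exists --- is essentially the full difficulty of the Fan--Raspaud Conjecture, which is why it is still open. A softer target worth attempting first is to invoke the quelling theorem announced in the abstract: with $F=M_{1}$ it yields a perfect matching $M_{2}$ with $G\setminus(M_{1}\cup M_{2})$ bipartite, already establishing Mazzuoccolo's conjecture; one would then want a third perfect matching $M_{3}$ avoiding the matching $M_{1}\cap M_{2}$, that is, a perfect matching of $G-(M_{1}\cap M_{2})$. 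But this graph has degree $2$ at the ends of $M_{1}\cap M_{2}$ and need not satisfy Tutte's condition, so the argument stalls there. Making it go through --- presumably by strengthening the quelling theorem so that the quelling matching $M_{2}$ can be forced to meet $M_{1}$ only inside a prescribed ``avoidable'' set, or by choosing $M_{1}$ and $M_{2}$ simultaneously --- is where I expect the real work to lie.
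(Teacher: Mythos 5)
The statement you were asked about is the Fan--Raspaud Conjecture, which the paper states as an open conjecture and does not prove; its main theorem establishes only the strictly weaker quelling/$S_4$ statement. So there is no ``paper's proof'' to match, and your attempt, as you yourself concede in the final paragraph, does not close the argument either. The concrete gaps are these. First, your proposed reduction to cyclically $4$-edge-connected snarks is plausible (and analogous reductions are carried out in the paper for the quelling property), but the re-gluing across a $2$- or $3$-edge-cut is not automatic for FR-triples: when you split $G$ along a $3$-edge-cut $\{f_1,f_2,f_3\}$, each of the three perfect matchings of each piece meets the cut in a prescribed pattern, and you must coordinate the two sides so that the three matchings agree on the cut \emph{and} no edge $f_i$ ends up in all three; this requires an inductive hypothesis stronger than the bare conjecture (prescribing the trace of the triple on the cut), which you have not formulated or verified.

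Second, and more seriously, your reformulation via $M_1$-alternating even subgraphs merely restates the conjecture: the ``parity/Eulerian-type condition coupled with a non-crossing consistency requirement'' that would certify a split $M_1=P_2\sqcup P_3$ is never made precise, let alone shown to be satisfiable, and this is exactly where the known difficulty of the problem lives. Your fallback --- apply the paper's quelling theorem with $F=M_1$ to get $M_2$, then seek a perfect matching of $G-(M_1\cap M_2)$ --- fails for the reason you identify: deleting a matching from a cubic graph can create odd components violating Tutte's condition, and nothing in the quelling theorem controls $M_1\cap M_2$ so as to prevent this. (Indeed the paper's own example $P'$ shows some perfect matchings cannot be extended to an FR-triple at all, so any approach that fixes $M_1$ first and then tries to complete it must also be prepared to change $M_1$.) In short, the proposal is an honest map of the terrain but contains no step that advances beyond the known equivalent reformulations; the conjecture remains unproved here as in the paper.
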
 

Three perfect matchings satisfying this property shall be referred to as an \emph{FR-triple}. As one can readily deduce, any three perfect matchings from a Fulkerson cover form an FR-triple. Although, as stated by the authors in \cite{FanRaspaud}, this conjecture is weaker than the Berge--Fulkerson Conjecture; once again, it remains unresolved. The best result in this direction is a result by the second author and \v{S}koviera \cite{MacajovaSkovieraOddness2} who show that the Fan--Raspaud Conjecture is true for bridgeless cubic graphs having oddness 2. The \emph{oddness} of a bridgeless cubic graph $G$ is the least (even) number of odd circuits in a $2$-factor amongst all possible $2$-factors of $G$. Seemingly stronger formulations of the Fan--Raspaud Conjecture are discussed in \cite{s4 gm jp, Vahan, zerafa thesis}, and shown to be equivalent to the original formulation.

A \emph{Fano colouring} of a cubic graph $G$ is a colouring of the edges of $G$ with the points of the Fano plane such that the colours given to the three edges incident to a vertex of $G$ form a line of the Fano plane, where the latter consists of seven points and seven lines (see Figure \ref{fig:FanoS4}(a)). A cubic graph admits a Fano colouring if and only if it is bridgeless (see \cite{HolroydSkoviera}). In \cite{MacajovaSkovieraOddCuts}, the second author and \v{S}koviera  deal with the question of how many points and lines of the Fano plane are sufficient to colour all bridgeless cubic graphs. It is shown that for a Fano colouring of a snark (a non-3-edge-colourable bridgeless cubic graph), all seven points and at least four lines are needed. In fact, it is proven that six, and conjectured that four, lines of the Fano plane colour every bridgeless cubic graph. For $i\in\{4,5\}$, the statement that for every bridgeless cubic graph there exists a Fano colouring with at most $i$ lines is denoted as the $\mathcal{F}_i$-Conjecture. It is shown that the $\mathcal{F}_4$-Conjecture is equivalent to the Fan--Raspaud Conjecture, and that the $\mathcal{F}_5$-Conjecture is equivalent to saying that every bridgeless cubic graph $G$ admits two perfect matchings whose intersection does not contain any odd edge-cut of $G$. In 2013, Mazzuoccolo proposed the following conjecture, which is equivalent to the $\mathcal{F}_5$-Conjecture restricted to only those odd edge-cuts that separate odd circuits.

\begin{figure}
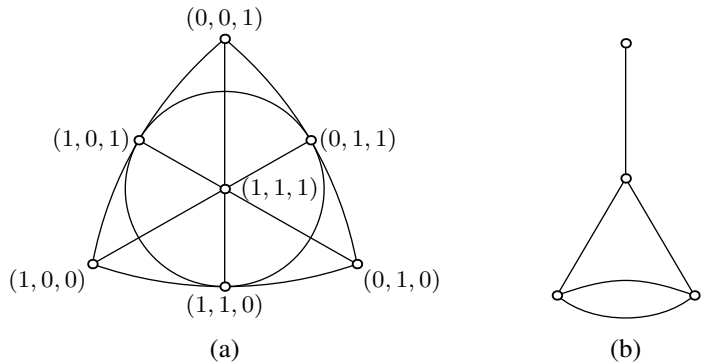

    \centering
    \setlength{\tabcolsep}{20pt}
    \begin{tabular}{cc}
    \includegraphics{FanoS4.0} &
    \includegraphics{FanoS4.1} \\
        (a) &  (b)
    \end{tabular}
    \caption{(a) The Fano plane (the configuration where points are non-zero elements of $\mathbb{Z}_2^3$ and lines are triples that sum up to zero); (b) The graph $S_4$.}
    \label{fig:FanoS4}
\end{figure}

\begin{conjecture}[Mazzuoccolo, 2013 \cite{MazzuoccoloS4}]\label{conjecture s4 matchings}
For any bridgeless cubic graph $G$, there exist two perfect matchings such that the deletion of their union leaves a bipartite subgraph of $G$. 
\end{conjecture}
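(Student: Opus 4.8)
I will first reduce Conjecture~\ref{conjecture s4 matchings} to the stronger theorem announced in the abstract, and then outline a proof of that theorem. Assuming the theorem, take $F$ to be any perfect matching of $G$; it is in particular a $1^+$-factor, so the theorem yields a perfect matching $M$ with $G\setminus(F\cup M)$ bipartite, which is precisely Mazzuoccolo's assertion. Hence it suffices to prove the theorem, and the first move is a reformulation. Let $F$ be a $1^+$-factor. Then $\bar F:=E(G)\setminus F$ has maximum degree at most $2$, so it is a vertex-disjoint union of paths and circuits; write $\mathcal C=\{C_1,\dots,C_k\}$ for its circuit components of odd length. For any perfect matching $M$ the spanning subgraph $G\setminus M$ is a $2$-factor, and $G\setminus(F\cup M)$ is a subgraph of it; an odd circuit of $G\setminus(F\cup M)$ would be a connected $2$-regular subgraph lying in both $G\setminus M$ and $\bar F$, hence a common component of the two, hence one of the $C_i$. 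Therefore $G\setminus(F\cup M)$ is bipartite if and only if no $C_i$ is a component of $G\setminus M$, i.e. if and only if $M\cap E(C_i)\neq\emptyset$ for every $i$. Conversely, for any family of pairwise vertex-disjoint odd circuits $C_1,\dots,C_k$ the set $E(G)\setminus\bigcup_i E(C_i)$ is a $1^+$-factor whose odd circuit components are exactly the $C_i$. Thus the theorem is equivalent to the statement $(\star)$: \emph{for every bridgeless cubic graph $G$, every family $\mathcal C$ of pairwise vertex-disjoint odd circuits, and every edge $e$ of $G$, there is a perfect matching $M$ with $e\in M$ and $M\cap E(C)\neq\emptyset$ for all $C\in\mathcal C$}; and applying $(\star)$ with $\mathcal C$ the given collection and $e$ arbitrary gives the promised corollary about quelling disjoint odd circuits.

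I would prove $(\star)$ by induction on $|V(G)|$, disposing of small edge-cuts first. There are no bridges, and if $G$ has a $2$-edge-cut or a non-trivial $3$-edge-cut, then one splits $G$ along it into two strictly smaller bridgeless cubic graphs in the usual way (replacing the removed side by a suppressed edge, respectively by a new degree-$3$ vertex), distributes $\mathcal C$ and $e$ between the two pieces — observing that a circuit crossing the cut produces an odd circuit on exactly one side and an even, hence harmless, one on the other, and keeping careful track of parities — solves the piece containing $e$ first, and finally reassembles the two perfect matchings across the cut, handing to the other piece the cut edge just used as its own prescribed edge (with the subcase in which $e$ is itself a cut edge handled directly). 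After these reductions $G$ may be taken cyclically $4$-edge-connected; it is then simple and, unless $G=K_4$ (settled by inspection), triangle-free, so every circuit in $\mathcal C$ has length at least $5$.

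For the surviving case I would recast $(\star)$ as a matching-extension problem. As $|V(C)|$ is odd, for any $w_C\in V(C)$ the graph $C-w_C$ is a path on evenly many vertices and hence has a unique perfect matching $N_C\subseteq E(C)$, which is nonempty and covers $V(C)\setminus\{w_C\}$. Choosing the $w_C$ compatibly with $e$ — take $w_C$ to be the end of $e$ on $C$ whenever $e$ has exactly one end on $C$, and dispatch the handful of configurations in which both ends of $e$ lie on one circuit (so that $e$ is an edge or a chord of it) by a separate short argument — the set $J:=\{e\}\cup\bigcup_{C\in\mathcal C}N_C$ is a matching that contains $e$ and meets every $C\in\mathcal C$. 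Since a matching extends to a perfect matching of $G$ exactly when $G$ minus the vertices it covers has one, $(\star)$ reduces to showing that for some admissible choice of the $w_C$ the graph $G-V(J)$ has a perfect matching; I would check this through Tutte's condition, exploiting the cyclic $4$-edge-connectivity of $G$ to bound the odd components created by deleting any vertex set and using the remaining freedom in the $w_C$ to repair a potential failure — or, if Tutte's condition nonetheless fails, reading off from the obstruction a small edge-cut of $G$ and recursing.

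The step I expect to be the main obstacle is precisely this final verification: proving that in a cyclically $4$-edge-connected cubic graph the vertices $w_C$ can always be chosen so that $G-V(J)$ meets Tutte's condition — equivalently, that several long odd circuits can be quelled by one matching at once while the prescribed edge $e$ is still honoured. The small-cut reductions, by contrast, are routine, although the bookkeeping they entail (parities of circuits crossing a cut, preservation of bridgelessness, the position of $e$, and a few degenerate configurations) has to be executed carefully.
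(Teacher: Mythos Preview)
Your reduction of the conjecture to the main theorem, the reformulation as $(\star)$, and the outline for $2$-edge-cuts and non-trivial $3$-edge-cuts all match the paper (its Claims~1 and~2). The gap is in the cyclically $4$-edge-connected case, which is the heart of the argument. Your plan there --- choose a vertex $w_C$ on each odd circuit, form the matching $J=\{e\}\cup\bigcup_C N_C$, and verify that $G-V(J)$ has a perfect matching via Tutte's condition --- is not actually carried out, and neither of the two fallbacks you sketch closes. The only freedom you retain is one vertex per circuit, and nothing you write explains why that suffices to avoid every Tutte barrier; and a Tutte obstruction in $G-V(J)$ yields no obvious cyclic edge-cut of size at most $3$ in $G$ itself (you have already excluded those), so ``reading off a small edge-cut and recursing'' has no target and no termination argument. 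You correctly flag this step as the main obstacle, but that is precisely the point at which a proof is needed rather than a hope.

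The paper never abandons the induction. In the cyclically $4$-edge-connected regime it picks an edge $f=uv$ at distance~$2$ from $e$, deletes $u$ and $v$, and reconnects the four dangling neighbours compatibly with $F$ (the reduction used by Voorhoeve and later in the exponential-matchings paper of Esperet \emph{et al.}). The resulting graph is bridgeless and strictly smaller, a perfect matching of it through $e$ lifts back to one of $G$ through $e$, and the odd circuits in $\mathcal C$ are either preserved or shortened by two. This disposes of the case $f\notin F$; a short argument then forces every edge at distance at most~$2$ from $e$ into $F$, after which two consecutive such reductions are applied and a finite case analysis (labelled (B0)--(B3)) verifies that the doubly reduced graph is still bridgeless, so induction again applies. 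The key idea missing from your proposal is exactly this local reduction at distance~$2$ from the prescribed edge, which keeps the inductive engine running instead of attempting a one-shot Tutte argument in the $4$-edge-connected case.
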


Let $H$ and $G$ be graphs. An \emph{$H$-colouring} of $G$ is a map $f:E(G)\rightarrow E(H)$ such that for any vertex $u\in V(G)$ there exists a vertex $v\in V(H)$ with $f\left (\partial_G(u)\right )=\partial_H(v)$, where $\partial_G(u)$ and $\partial_H(v)$ respectively denote the sets of edges in $G$ and $H$ incident to the vertices $u$ and $v$. Conjecture \ref{conjecture s4 matchings} was later referred to as the $S_{4}$-Conjecture by Mazzuoccolo and the last author, since it is equivalent to colouring the edges of $G$ with the edges of the graph $S_{4}$ shown in Figure \ref{fig:FanoS4}(b) (see \cite{gmgtjp, s4 gm jp, zerafa thesis} for more details). Consequently, as in the last two cited documents, given a bridgeless cubic graph $G$, a pair of perfect matchings whose deletion yields a bipartite subgraph of $G$ shall be referred to as an \emph{$S_{4}$-pair}. 
 
In 2013, Mkrtchyan \cite{VahanPetersen} showed that if the Petersen Colouring Conjecture by Jaeger \cite{Jaeger} is true, the Petersen graph is the unique connected bridgeless cubic graph which can colour all bridgeless cubic graphs (colouring is done in the same manner as $H$-colourings defined above). In \cite{gmgtjp} this was extended and it was shown that if one was to remove all degree conditions on the graph $H$ by which we are colouring, any bridgeless cubic graph can be coloured by the Petersen graph, and by a unique other graph: $S_{4}$ (see \cite{gmgtjp} for more details about the uniqueness of $S_4$). Related results can be found in \cite{vh1, vh2}.

Clearly, by the result in \cite{MacajovaSkovieraOddness2}, any bridgeless cubic graph having oddness 2 admits an $S_{4}$-pair. The authors of \cite{s4 gm jp, zerafa thesis} extend this result to bridgeless cubic graphs having oddness 4. The authors also show that if the $S_{4}$-Conjecture is true, then there exist cubic graphs admitting a bridge for which an $S_{4}$-pair exists -- this is not possible when considering the Berge--Fulkerson Conjecture and the weaker conjectures mentioned above, because if a cubic graph admitting a perfect matching contains a bridge, then, every perfect matching of this graph must intersect the bridge (itself an odd edge-cut).

\begin{theorem}[\cite{s4 gm jp, zerafa thesis}]\label{theorem S4 for graphs with bridges}
Let $G$ be a connected cubic graph having $k$ bridges, all of which lie on a single path, for some positive integer $k$. If the $S_{4}$-Conjecture is true, then $G$ admits an $S_{4}$-pair.
\end{theorem}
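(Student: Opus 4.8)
The plan is to reduce the statement for a cubic graph $G$ with $k$ bridges on a path to the $S_4$-Conjecture for bridgeless cubic graphs by a surgical argument. First I would set up the structure: let $b_1, \dots, b_k$ be the bridges, lying on a path $P$. Removing all bridges decomposes $G$ into components; since the bridges lie on a single path, the components can be linearly ordered $C_0, C_1, \dots, C_k$, where consecutive components $C_{i-1}$ and $C_i$ are joined by the bridge $b_i$. The two ``end'' components $C_0$ and $C_k$ each meet exactly one bridge, while each intermediate component $C_i$ ($0 < i < k$) meets exactly two bridges. In each component $C_i$, the vertices incident to bridges have degree $2$ in $C_i$ and all other vertices have degree $3$.

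The key step is to cap off each component to make it cubic and bridgeless, in a way compatible with perfect matchings. For an end component, say $C_0$, the single degree-$2$ vertex $v$ can be handled by adding a pendant structure or, more cleanly, by taking two copies of $C_0$ and joining the two copies of $v$ by an edge, or attaching a suitable gadget; the standard trick is to replace the half-edge at $v$ with an edge to a new vertex that is then expanded into a small bridgeless cubic piece (e.g.\ a triangle or a $K_4$ minus an edge). For an intermediate component $C_i$ with two degree-$2$ vertices $u_i$ and $w_i$, one adds the edge $u_iw_i$ (or, if $u_iw_i$ is already present or problematic, a short gadget path through a new bridgeless cubic fragment) to obtain a cubic graph $\widehat{C_i}$; one must check $\widehat{C_i}$ is bridgeless, which follows because $C_i$ was obtained from the bridgeless-mod-$P$ structure and the added edge closes up the only deficiency. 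Then apply the $S_4$-Conjecture to each $\widehat{C_i}$ to get an $S_4$-pair $(M_i', N_i')$, i.e.\ two perfect matchings whose deletion leaves a bipartite graph.

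Next I would reassemble. The added edge $u_iw_i$ either lies in $M_i'$, in $N_i'$, in both, or in neither; by adjusting and, if necessary, using symmetry of the added gadget (choosing which perfect matching of the gadget to use), one arranges that the pair behaves consistently across the glued components. The reassembly of $G$ then takes, inside each $C_i$, the restriction of $M_i'$ and $N_i'$, and must decide how the bridges $b_1, \dots, b_k$ are covered. Since every perfect matching of $G$ must contain every bridge, both perfect matchings of the $S_4$-pair of $G$ contain all the $b_i$; this forces, at each bridge endpoint, that the added edge in $\widehat{C_i}$ is \emph{not} used by the corresponding perfect matching, so one wants the $S_4$-pairs of the $\widehat{C_i}$ chosen so that the added edges are avoided by both matchings. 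A counting/parity argument on the gadget shows such a choice exists (the gadget contributes a perfect matching of its own interior not touching the attachment edge). Finally, one checks bipartiteness of $G \setminus (M \cup N)$: after deleting the two matchings, each $C_i$ contributes a bipartite graph (bipartiteness of $\widehat{C_i}\setminus(M_i'\cup N_i')$ restricted to $C_i$), and the bridges $b_i$ survive in $G\setminus(M\cup N)$ only if\ldots --- but in fact since every $b_i$ lies in both $M$ and $N$, no bridge survives, so $G\setminus(M\cup N)$ is the disjoint union of the bipartite pieces $C_i\setminus(M_i'\cup N_i')$, hence bipartite.

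I expect the main obstacle to be the gadget bookkeeping at the bridge endpoints: one must design the capping gadgets so that (i) each $\widehat{C_i}$ is genuinely bridgeless and cubic, (ii) the $S_4$-Conjecture can be invoked, and (iii) the resulting $S_4$-pairs can be chosen with the added edges avoided by both perfect matchings and then glued across all $k$ bridges simultaneously and coherently. Handling the two end components (which have only one degree-$2$ vertex) may require a slightly different gadget than the intermediate ones, and one should verify that adding edges does not create parallel edges or small cuts that would obstruct bridgelessness; a uniform choice of gadget (say, subdividing and inserting a fixed small bridgeless cubic fragment) should make all these checks routine. The bipartiteness conclusion and the ``every bridge is in every perfect matching'' fact are then immediate, as noted in the discussion preceding the theorem.
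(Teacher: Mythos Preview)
The paper does not contain a proof of this theorem; it is merely quoted as a result from \cite{s4 gm jp, zerafa thesis}. So there is no ``paper's own proof'' to compare against here. I will therefore comment only on the internal soundness of your proposal.

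There is a genuine gap, and it shows up in two places. First, a direction error: you write that since every bridge lies in every perfect matching of $G$, the added edge in $\widehat{C_i}$ must be \emph{avoided} by both perfect matchings of the $S_4$-pair. The opposite is true. If the bridge $b_i$ covers its endpoint $v$ in $G$, then the two edges of $C_i$ at $v$ are \emph{not} in the matching; hence, in $\widehat{C_i}$, the only way to cover $v$ is with the added edge. So you need the added edge to lie in \emph{both} matchings of the $S_4$-pair of $\widehat{C_i}$, not in neither.

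Second, and more seriously, even after correcting the direction you are invoking something strictly stronger than the hypothesis. The $S_4$-Conjecture only asserts the existence of \emph{some} $S_4$-pair in each bridgeless cubic $\widehat{C_i}$; it gives you no control over whether a prescribed edge lies in both (or neither) of the two matchings. Your sentence ``a counting/parity argument on the gadget shows such a choice exists'' is precisely the missing step, and no such argument is supplied. For the intermediate pieces $\widehat{C_i}=C_i+l_ir_{i+1}$, you need an $S_4$-pair containing $l_ir_{i+1}$ in both matchings; for the end pieces, whatever capping you use (suppression, doubling, attaching a fragment) produces the same problem or reintroduces a bridge. Without an auxiliary result of the form ``the $S_4$-Conjecture is equivalent to the statement that for every bridgeless cubic graph and every edge $e$ there is an $S_4$-pair with $e$ in both matchings'' (which is exactly the kind of equivalence proved in \cite{s4 gm jp, zerafa thesis}, and which you would have to establish or cite), the reassembly step does not go through.
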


\subsection{Important definitions and notation}

Graphs considered in this paper may contain parallel edges, but they cannot contain loops, unless otherwise stated. We speak about a \emph{simple} graph if parallel edges are not allowed.

Let $G$ be a graph and $(V_1,V_2)$ be a partition of its vertex set, that is, $V_1\cup V_2=V(G)$ and $V_1\cap V_2=\emptyset$. Then, by $E(V_1,V_2)$ we denote the set of edges having one endvertex in $V_1$ and one in $V_2$; we call such a set an \emph{edge-cut}. An edge which itself is an edge-cut of size one is a \emph{bridge}.

An edge-cut $X=E(V_1,V_2)$ is called \emph{cyclic} if both graphs $G[V_1]$ and $G[V_2]$, obtained from $G$ after deleting $X$, contain a \emph{circuit} (a 2-regular connected subgraph). 
The \emph{cyclic edge-connectivity} of a graph $G$ is defined as the smallest size of a cyclic edge-cut in $G$ if $G$ admits one; it is defined as $|E(G)|-|V(G)|+1$ otherwise. For cubic graphs, the latter only concerns three graphs: $K_4$, $K_{3,3}$, and the graph consisting of two vertices joined by three parallel edges, whose cyclic edge-connectivity is thus 3, 4, and 2, respectively.

Let $G$ be a bridgeless cubic graph. A \emph{$1^+$-factor} of $G$ is the edge set of a spanning subgraph of $G$ such that its vertices have degree 1, 2 or 3. In particular, a \emph{perfect matching} and a \emph{$2$-factor} of $G$ are $1^+$-factors whose vertices have exactly degree 1 and 2, respectively. A \emph{parity subgraph} $J$ of a graph $G$ is a subgraph $H$ such that the parity of the degree of every vertex is the same in $H$ as is in $G$. For cubic graphs, a parity subgraph is a $1^+$-factor with no vertices of degree 2.
We remark that in \cite{kaiserraspaud}, parity subgraphs (also known as \emph{joins} in the literature) have been already studied with respect to problems discussed above.  

\begin{definition}
A $1^+$-factor $F$ and a perfect matching $M$ of $G$ form a \emph{quelling pair} if $G\setminus (F\cup M)$ is bipartite.
\end{definition}

\section{Disjoint odd circuits in bridgeless cubic graphs}\label{section odd}

In \cite{s4 gm jp, zerafa thesis}, a stronger problem than the $S_{4}$-Conjecture is proposed.

\begin{problem}[\cite{s4 gm jp, zerafa thesis}]\label{problem given a Perfect Matching S4}
Establish whether any perfect matching of a bridgeless cubic graph can be extended to an $S_{4}$-pair.
\end{problem}

It is known that not every perfect matching can be extended to an FR-triple and consequently, neither to a Fulkerson cover. In fact, consider the Petersen graph $P$ and expand each of its vertices into a triangle (see Figure \ref{figure Ptriangles S4 yes FR no}). Let the resulting graph on $30$ vertices be denoted by $P'$, and let $N$ be the set of edges in $P'$ corresponding to $E(P)$. The set $N$ is a perfect matching of $P'$, and it is not difficult to see that it cannot be extended to an FR-triple or to a Fulkerson cover. However, as can be seen in Figure \ref{figure Ptriangles S4 yes FR no}, $N$ (shown as dashed lines) can in fact be extended to an $S_{4}$-pair of $P'$ (with the edges of the second perfect matching shown as dotted lines). 

\begin{figure}[ht]
      \centering
      \includegraphics[scale=1]{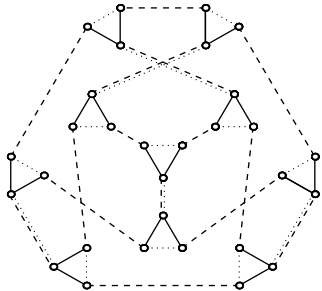}
      \caption{An example of an $S_{4}$-pair in the graph $P'$ obtained from the Petersen graph $P$ by replacing each vertex by a triangle.}
      \label{figure Ptriangles S4 yes FR no}
\end{figure}

Moreover, saying that Problem \ref{problem given a Perfect Matching S4} is true for every bridgeless cubic graph is equivalent to the following statement: for every bridgeless cubic graph $G$, given any collection $\mathcal{O}$ of disjoint odd circuits of $G$, there exists a perfect matching $M$ such that $M\cap E(C)\neq \emptyset$, for every $C\in\mathcal{O}$ (see \cite{s4 gm jp, zerafa thesis}). One implication is clearly obvious. Thus, assume that every perfect matching of any bridgeless cubic graph can be extended to an $S_4$-pair, and consider a collection of disjoint odd circuits in a bridgeless cubic graph $G$. Expand every vertex not covered by the circuits in the collection to a triangle, and let the resulting bridgeless cubic graph be denoted by $G'$. The initial odd circuits and all the new expanded triangles give a $2$-factor $R$ of $G'$. 
   Let $N'=E(G')\setminus R$. By our assumption, there exists a perfect matching $M'$ such that $G'\setminus (N'\cup M')$ is bipartite, implying that $M'$ intersects all the odd circuits in $R$, including all the new expanded triangles. Let $M'_{\triangle}$ be the set of edges belonging simultaneously to $M'$ and the new expanded triangles. 
   One can immediately see that the set of the edges of $G$ corresponding to $M'\setminus M'_{\triangle}$ is a perfect matching of $G$ intersecting all the odd circuits in the initial collection of odd circuits in $G$, proving the equivalence of the two statements.
 
\subsection{Main result}

We now proceed to proving our main result, and consequently solve Problem \ref{problem given a Perfect Matching S4} and the $S_{4}$-Conjecture. 

\begin{theorem}
Let $G$ be a bridgeless cubic graph. Let $F$ be a $1^+$-factor of $G$ and let $e\in E(G)$. Then, there exists a perfect matching $M$ of $G$ such that $e\in M$, and $F$ and $M$ are a quelling pair.
\end{theorem}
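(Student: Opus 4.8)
The plan is to reduce the statement to a more tractable combinatorial core and then argue by minimal counterexample on the cyclic edge-connectivity, exploiting the flexibility given by the $1^+$-factor $F$. First I would dispense with the role of $F$: since $F$ is a $1^+$-factor, its complement $G\setminus F$ is a subgraph in which every vertex has degree $0$, $1$ or $2$, so $G\setminus F$ is a disjoint union of paths and circuits. The bipartiteness of $G\setminus(F\cup M)$ is therefore equivalent to requiring that $M$ meet every odd circuit of $G\setminus F$ (deleting $M$ from a union of paths and even circuits can never create an odd circuit, and an odd circuit of $G\setminus F$ survives in $G\setminus(F\cup M)$ precisely when $M$ misses it). Hence the theorem is equivalent to the following: \emph{given any collection $\mathcal{O}$ of pairwise disjoint odd circuits of $G$ and any edge $e$, there is a perfect matching $M$ with $e\in M$ that meets every circuit in $\mathcal{O}$} — this is exactly the reformulation recorded in Section~\ref{section odd}, now strengthened by the prescribed edge $e$. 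I would state and prove this reformulation as the working statement.

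Next I would set up the induction. Take a counterexample $G$ minimising $|V(G)|$, with a bad collection $\mathcal{O}$ and bad edge $e$. Standard reductions should let me assume $G$ is cyclically $4$-edge-connected (or handle $K_4$, $K_{3,3}$, and the theta graph directly): a cyclic $k$-edge-cut with $k\le 3$ splits $G$ into two smaller bridgeless cubic graphs after suitable completion, the circuits of $\mathcal{O}$ distribute across the two sides (a circuit crossing the cut is impossible for a $2$- or $3$-edge-cut by parity, up to the usual care), and one stitches the matchings together along the cut, choosing on which side to place $e$. The heart of the argument is the cyclically $4$-edge-connected case. Here I would pick one circuit $C\in\mathcal{O}$, or if $\mathcal{O}$ is empty pick $C$ to be any odd circuit through a convenient vertex, and contract $C$ to a single vertex $v_C$; since $C$ is odd, $G/C$ may fail to be cubic at $v_C$ (it has degree $|C|$, an odd number $\ge 3$), so instead I would split $v_C$ back into a path or use the classical trick of replacing the odd circuit by a single vertex of odd degree and invoking a parity/$T$-join argument. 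The cleanest route is: by Petersen's theorem $G$ has a perfect matching $M_0$ through $e$; the symmetric difference of $M_0$ with another perfect matching is a disjoint union of even circuits, and by alternating $M_0$ along a carefully chosen such circuit through $C$ one can flip $M_0$ so that it picks up an edge of $C$ while keeping $e$ and the edges already chosen on the other circuits of $\mathcal{O}$ — here one uses $2$-factor/$T$-join exchange arguments and the fact that the circuits of $\mathcal{O}$ are disjoint so the flips can be localised.

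The main obstacle, as I see it, is precisely the interaction between several circuits of $\mathcal{O}$ simultaneously: flipping $M_0$ along an alternating circuit to hit one odd circuit $C_1$ may destroy the edge previously gained on another circuit $C_2$, and iterating need not terminate. To control this I would work with the auxiliary graph obtained by contracting \emph{all} circuits of $\mathcal{O}$ at once; the result is a graph $G'$ in which each contracted vertex $v_{C}$ has odd degree $|C|$ and all other vertices have degree $3$. A perfect matching of $G$ meeting every $C\in\mathcal{O}$ corresponds to a spanning subgraph of $G'$ in which every original vertex has degree $1$ and every contracted vertex has odd degree; by standard $T$-join / parity-subgraph theory (and this is where the remark in the excerpt about joins and \cite{kaiserraspaud} is relevant) such a subgraph exists iff a certain cut condition holds, and the cut condition is guaranteed by bridgelessness together with the fact that each $|C|$ is odd. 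Forcing $e\in M$ is then an easy local adjustment, or is built into the $T$-join by designating the endpoints of $e$ appropriately. Verifying the cut condition in the cyclically $4$-edge-connected case, using that no small cyclic cut separates the contracted vertices badly, is the step I expect to require the most care, and it is likely where the minimality of the counterexample is genuinely used.
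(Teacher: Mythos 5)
Your opening reformulation is correct and matches the paper's setup: after replacing $F$ by an inclusion-wise maximal bad $1^+$-factor one may assume $G\setminus F$ is a disjoint union of odd circuits, and the task becomes finding a perfect matching through $e$ that hits each of them. The reduction to the cyclically $4$-edge-connected case is also essentially the paper's Claims 1 and 2, though your parenthetical claim that a circuit cannot cross a $2$- or $3$-edge-cut ``by parity'' is wrong: an odd circuit of $G\setminus F$ can use both edges of a $2$-cut or two edges of a $3$-cut, and the paper must argue that such a circuit splits into two circuits of opposite parity on the two sides, so that the odd piece is hit on its own side. This is a real case, not one that disappears.

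The genuine gap is the cyclically $4$-edge-connected core, which is where all the work lies. Your first idea (alternating exchanges starting from a Petersen matching) fails for exactly the reason you name, and your fallback --- contract all circuits of $\mathcal{O}$ and invoke $T$-join theory --- does not go through. The object you need in the contracted graph is not a $T$-join (a subgraph with a prescribed odd-degree set and no other constraints) but a degree-constrained factor: degree exactly $1$ at every uncontracted vertex and odd degree \emph{at most} $|C|-2$ at each contracted vertex $v_C$; note that any perfect matching of $G$ automatically gives $v_C$ odd degree, so ``odd degree'' does not encode the hitting condition at all. Moreover, such a subgraph of $G'$ need not lift back to a perfect matching of $G$: one must also complete the matching inside each circuit $C$, which forces every arc of $C$ between consecutive externally matched vertices to have the right parity --- a condition invisible at $v_C$. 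There is no ``standard cut condition guaranteed by bridgelessness'' for this factor problem; asserting one essentially assumes the theorem (which, after all, implies the $S_4$-Conjecture). The paper's actual mechanism is different in kind: it proves that every edge at distance $2$ from $e$ must belong to $F$, using the Voorhoeve reduction (delete the two endvertices of an edge $f=uv$ and add the edges $\alpha\beta$ and $\gamma\delta$), which has the crucial properties that a perfect matching of the reduced graph through $e$ extends to one of $G$ through $e$ and that the odd circuit through $f$ is shortened by two edges; it then reaches a contradiction via a second, double reduction whose bridgelessness is verified by a careful cut analysis using cyclic $4$-edge-connectivity. Some local reduction of this sort --- one that simultaneously preserves the prescribed edge and the hitting property --- is the missing idea in your proposal.
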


\begin{proof}
Let $G$ be a minimum counterexample to the above statement. Let $e\in E(G)$ be an edge of $G$ such that there exists a $1^+$-factor which cannot be extended to a quelling pair by a perfect matching containing $e$. Amongst all such $1^+$-factors of $G$, we can choose an inclusion-wise maximal one, denoted by $F$. By the choice of $F$, we may assume that $e\in F$ and that $G\setminus F$ contains only (disjoint) odd circuits, that is, $F$ is a join. Let the set of components of $G\setminus F$ be denoted by $\mathcal{O}$. 

If $G$ is 3-edge-colourable, then its edge set decomposes into three perfect matchings. It suffices to take $M$ to be the one containing $e$. Then $G\setminus M$ is a union of even circuits, and so $F$ and $M$ are a quelling pair by definition. 
Therefore, $G$ is not 3-edge-colourable, and hence, it is not bipartite. In particular, $G$ is not the unique bipartite cubic graph on two vertices. 

It is easy to see that we may assume that $G$ is connected.
\\

\noindent\textbf{Claim 1.} The graph $G$ does not have any 2-edge-cuts.

\noindent\emph{Proof of Claim 1.} Suppose that $G$ admits a 2-edge-cut $E(V',V^{\prime\prime})$ with 
$E(V',V'')=\{f_{1}, f_{2}\}=:X$, 
where $f_{1}=v'_{1}v''_{1}$ and $f_{2}=v'_{2}v''_{2}$ for some $v'_{1},v'_{2}\in V'$ and $ v''_{1},v''_{2} \in V''$. Since $G$ is bridgeless, $v'_1\neq v'_2$ and $v''_1\neq v''_2$. See Figure \ref{figure 2cut} for an illustration.

\begin{figure}[ht]
      \centering
      \includegraphics[scale=1]{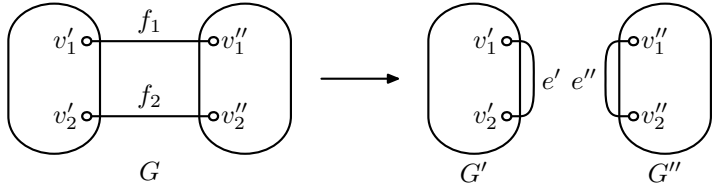}
      \caption{The graphs $G'$ and $G''$ when $G$ admits a 2-edge-cut $\{f_1,f_2\}$.}
      \label{figure 2cut}
\end{figure}

By the choice of $F$, we have that either $X$ is contained in the edge set of some odd circuit $C_X$ in $\mathcal{O}$, or $X\subset F$. Let $G'$ and $G''$ be the two graphs on the vertex sets $V'$ and $V''$ obtained from $G$ after deleting $X$ and adding the edges $e'=v'_{1}v'_{2}$ and $e''=v''_{1}v''_{2}$, respectively. Let 
\[F' =
  \begin{cases}
  F\cap E(G') & $if $F\cap X=\emptyset$,$\\
  \left(F\cap E(G')\right)\cup \{e'\} & $otherwise;$
  \end{cases}\]
and let 
\[F'' =
  \begin{cases}
  F\cap E(G'') & $if $F\cap X=\emptyset$,$\\
  \left(F\cap E(G'')\right)\cup \{e''\} & $otherwise.$
  \end{cases}\]
Clearly, $F'$ ($F''$) is a $1^+$-factor of $G'$ ($G''$, respectively). 

Without loss of generality, we may assume that at least one of the endvertices of $e$ corresponds to a vertex of $G'$. First, consider the case when $e\in X$. Observe that by our choice of $F$, $X\subset F$. By minimality of $G$, there exists a perfect matching $M'$ of $G'$ ($M''$ of $G''$) containing $e'$ ($e''$) such that $F'$ and $M'$ ($F''$ and $M''$) are a quelling pair of $G'$ (of $G''$, respectively). Consequently, $M=M'\cup M''\cup X \setminus \{e',e''\}$ is a perfect matching of $G$ containing $e$. Moreover, since $X\subset M$, $F$ and $M$ are a quelling pair of $G$, contradicting our initial assumption.

Therefore, $e$ corresponds to an edge in $G'\setminus e'$. For simplicity, we shall refer to the edge in $G'$ corresponding to $e$, with the same name, that is, by $e$. By minimality of $G$, there exists a perfect matching $M'$ of $G'$ containing $e$ such that $F'$ and $M'$ are a quelling pair of $G'$. 

If $e'\in M'$, then let $M''$ be a perfect matching of $G''$ containing $e''$ such that $F''$ and $M''$ are a quelling pair of $G''$. Such a perfect matching $M''$ exists by minimality of $G$. Once again, $M=M'\cup M''\cup X \setminus \{e',e''\}$ is a perfect matching of $G$ containing $e$, and since $X\subset M$, $F$ and $M$ are a quelling pair of $G$. Thus, $e'\notin M'$.

In this case, let $e'''$ be an edge adjacent to $e''$ in $G''$. By minimality of $G$, there exists a perfect matching $M''$ of $G''$ containing $e'''$ such that $F''$ and $M''$ are a quelling pair of $G''$. Since $e''\notin M''$, $M=M'\cup M''$ is a perfect matching of $G$ containing $e$. It remains to prove that $F$ and $M$ are a quelling pair of $G$. Every odd circuit $C\ne C_X$ in $\mathcal{O}$ corresponds to an odd circuit either in $G'\setminus F'$ or in $G''\setminus F''$. The odd circuit $C_X$ (if it exists), corresponds to two circuits $C'_X$ and $C''_X$ in $G'\setminus F'$ and $G'' \setminus F''$, respectively, of different parity. Therefore, at least one of them (the odd one) is hit, that is, intersected, by at least one edge of $M'$ or $M''$ in $G'$ or $G''$, respectively, and thus, $C_X$ is hit by at least one edge of $M$ in $G$. Indeed, $F$ and $M$ are a quelling pair of $G$, a contradiction.
\hfill {\tiny$\blacksquare$}\\

In particular, Claim 1 implies that $G$ does not have any parallel edges, that is, $G$ is simple.\\

\noindent\textbf{Claim 2.} The graph $G$ does not have any cyclic 3-edge-cuts.

\noindent\emph{Proof of Claim 2.} Suppose that $G$ admits a cyclic 3-edge-cut $E(V',V^{\prime\prime})$ with 
$E(V',V'')=\{f_{1}, f_{2}, f_{3}\}=:X$, where each $f_{i}=v'_{i}v''_{i}$, for some $v'_{1},v'_{2},v'_{3}\in V'$ and $ v''_{1},v''_{2}, v''_{3} \in V''$. Since $G$ has no 2-edge-cuts, the vertices $v'_{1},v'_{2},v'_{3},  v''_{1},v''_{2}, v''_{3}$ are all  distinct.

By the choice of $F$, the parity of $|F\cap X|$ is odd, that is, either $|F\cap X|=3$, meaning there is no odd circuit in $\mathcal{O}$ intersecting $X$, or $|F\cap X|=1$, meaning the cut $X$ is intersected by a unique circuit $C_X$ in $\mathcal{O}$. Without loss of generality, we shall assume that when $|F\cap X|=1$, $F\cap X=f_{1}$.

Let $G'$ and $G''$ be the two graphs obtained from $G$ after deleting $X$ and joining the vertices $v'_{i}$ to a new vertex $v'$, and the vertices $v''_{i}$ to a new vertex $v''$. For each $i\in\{1,2,3\}$, let $e'_{i}=v'_{i}v'$ and $e''_{i}=v''_{i}v''$. 

\begin{figure}[ht]
      \centering
      \includegraphics[width=0.8\textwidth]{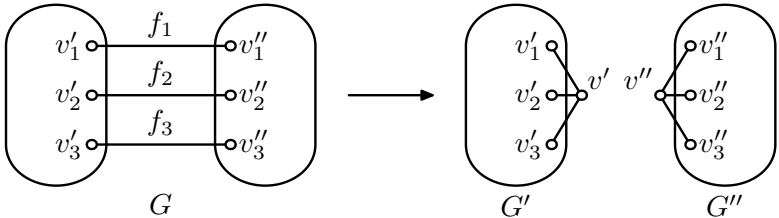}
      \caption{The graphs $G'$ and $G''$ when $G$ admits a cyclic 3-edge-cut $\{f_1,f_2,f_3\}$.}
      \label{figure 3cut}
\end{figure}

Let

\[F' =
  \begin{cases}
  \left(F\cap E(G')\right)\cup \{e'_{1}\} & $if $|F\cap X|=1$,$\\
  \left(F\cap E(G')\right)\cup\{e'_{1},e'_{2},e'_{3}\} & $otherwise.$
  \end{cases}\]

Similarly, let
\[F'' =
  \begin{cases}
  \left(F\cap E(G'')\right)\cup \{e''_{1}\} & $if $|F\cap X|=1$,$\\
  \left(F\cap E(G'')\right)\cup\{e''_{1},e''_{2},e''_{3}\} & $otherwise.$
  \end{cases}\]

It is not hard to see that $F'$ ($F''$) is a $1^+$-factor of $G'$ (of $G''$, respectively). Every odd circuit $C\ne C_X$ in $\mathcal{O}$ corresponds to an odd circuit either in $G'\setminus F'$ or in $G''\setminus F''$. The odd circuit $C_X$ (if it exists) corresponds to two circuits $C'_X$ and $C''_X$ in $G'\setminus F'$ and $G'' \setminus F''$, respectively, having different parity.

Without loss of generality, we may assume that at least one of the endvertices of $e$ corresponds to a vertex in $V'$. 
We consider two cases, depending on the size of $|F\cap X|$. 

\textbf{Case A.} First, consider the case when $|F\cap X|=3$. When $e\in X$, say $e=f_{1}$, then, by minimality of $G$, there exists a perfect matching $M'$ of $G'$ ($M''$ of $G''$) containing $e'_{1}$ ($e''_{1}$), such that $F'$ and $M'$ ($F''$ and $M''$) form a quelling pair of $G'$ (of $G''$, respectively). Consequently, $M=M'\cup M''\cup \{f_{1}\}\setminus\{e'_{1},e''_{1}\}$ is a perfect matching of $G$ containing $e=f_1$, and moreover, $F$ and $M$ are a quelling pair of $G$. 

It remains to consider the case when $e\notin X$, and so the endvertices of $e$ both correspond to vertices in $G'$. Once again, for simplicity, we shall refer to this edge as $e$. Let $M'$ be a perfect matching of $G'$ containing $e$ such that $F'$ and $M'$ are a quelling pair of $G'$. Without loss of generality, assume that $e'_{1}\in M'$. Let $M''$ be a perfect matching of $G''$ containing $e''_{1}$ such that $F''$ and $M''$ are a quelling pair of $G''$. Let $M=M'\cup M''\cup \{f_{1}\}\setminus \{e'_{1},e''_{1}\}$. This is a perfect matching of $G$ containing $e$, and as before, $F$ and $M$ form a quelling pair of $G$, a contradiction. 

\textbf{Case B.} Suppose that $|F\cap X|=1$. 
When $e\in X$, say $e=f_{i}$, then, by minimality of $G$, there exists a perfect matching $M'$ of $G'$ ($M''$ of $G''$) containing $e'_{i}$ ($e''_{i}$), such that $F'$ and $M'$ ($F''$ and $M''$) form a quelling pair of $G'$ (of $G''$, respectively). Consequently, $M=M'\cup M''\cup \{f_{i}\}\setminus\{e'_{i},e''_{i}\}$ is a perfect matching of $G$ containing $e=f_i$.
To observe that $F$ and $M$ form a quelling pair of $G$, it suffices to note that the only odd circuit in $\mathcal{O}$  possibly not intersected by $M$ is $C_X$. However, this can only happen if $i=1$, and, if this is the case, then
either $C'_X$ or $C''_X$ is odd, and so it is hit by an edge of $M'$ or $M''$, not incident to $v'$ or $v''$. However, this means that $C_X$ is also hit by the corresponding edge of $M$ in $G$, a contradiction.

It remains to consider the case when $e\notin X$, and so the endvertices of $e$ both correspond to vertices in $G'$. Once again, for simplicity, we shall refer to this edge as $e$. Let $M'$ be a perfect matching of $G'$ containing $e$ such that $F'$ and $M'$ are a quelling pair of $G'$. We have $e'_{i}\in M'$ for some $i\in\{1,2,3\}$. Let $M''$ be a perfect matching of $G''$ containing $e''_{i}$ such that $F''$ and $M''$ are a quelling pair of $G''$. Let $M=M'\cup M''\cup \{f_{i}\}\setminus \{e'_{i},e''_{i}\}$. This is a perfect matching of $G$ containing $e$. As before, $F$ and $M$ form a quelling pair of $G$, unless $i=1$ and no edge of $G'$ or $G''$ corresponding to an edge of $C_X$ is hit by $M'$ or $M''$, which is impossible since either $C'_X$ or $C''_X$ is an odd circuit. \hfill {\tiny$\blacksquare$}\\

From this point on we may assume that $G$ has no cyclic 3-edge-cuts. Therefore, it is cyclically 4-edge-connected, unless $G=K_4$, but for this particular graph it is easy to see that it is not a counterexample, since for every edge $e$, the complement of the only perfect matching containing $e$ is an even circuit.  


We now consider the edges at distance $2$ from $e$ (distance measured as the distance of the corresponding vertices in the line graph of $G$). 

\noindent\textbf{Claim 3.} Let $f$ be an edge at distance 2 from $e$. Then, $f\in F$. 

\noindent\emph{Proof of Claim 3.} 
We will use a procedure that transforms a cubic graph $G$ into a cubic graph $G'$ smaller than $G$, such that every perfect matching of $G'$ containing a certain edge can be extended into a perfect matching of $G$ containing the corresponding edge. This operation was already used by Voorhoeve \cite{voorhoeve} to study perfect matchings in bipartite cubic graphs and it is a key ingredient for counting perfect matchings in general in \cite{EsperetLovaszPlummer}.

Let $f=uv$, let the neighbours of $u$ distinct from $v$ be $\alpha$ and $\gamma$, and let the neighbours of $v$ distinct from $u$ be $\beta$ and $\delta$. 
In particular, since $G$ is cyclically 4-edge-connected, these four vertices are all distinct. Without loss of generality, we may assume that $\alpha$ is an endvertex of $e$.

\begin{figure}[ht]
      \centering
      \includegraphics[width=0.5\textwidth]{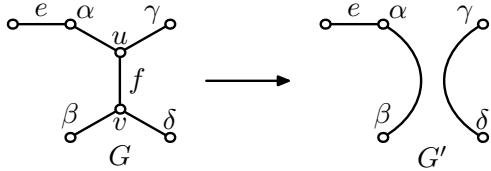}
      \caption{The vertices $\alpha,\beta, \gamma, \delta$ and an $(\alpha\beta:\gamma\delta)_{uv}$-reduction.}
      \label{figure reduction1}
\end{figure}

As shown in Figure \ref{figure reduction1}, we obtain a smaller graph (possibly containing parallel edges) by deleting the endvertices of $f$ (together with all edges incident to them) and adding the edges $\alpha\beta$ and $\gamma\delta$. Let this resulting graph be $G'$. We shall say that $G'$ is obtained after an $(\alpha\beta:\gamma\delta)_{uv}$-reduction. 
It is well-known that when applying this operation, the cyclic edge-connectivity of a cubic graph can drop by at most 2. Since $G$ is cyclically 4-edge-connected, $G'$ is bridgeless. 

Let the edge in $G'$ corresponding to $e$, and the vertices in $G'$ corresponding to $\alpha,\beta, \gamma, \delta$ be denoted by the same name. We recall that any perfect matching of $G'$ which contains $e$ can be extended to a perfect matching of $G$ containing the edge $e$ (see also Figure \ref{figure reduction11}). In fact, let $M'$ be a perfect matching of $G'$ containing $e$. This is extended to a perfect matching $M$ of $G$ containing $e$ as follows:

\[M =
  \begin{cases}
  M'\cup\{u\gamma, v\delta\}\setminus \{\gamma\delta\}  & $if $\gamma\delta\in M'$,$ \\
  M'\cup \{f\} & $otherwise.$ 
  \end{cases}\]

\begin{figure}[ht]
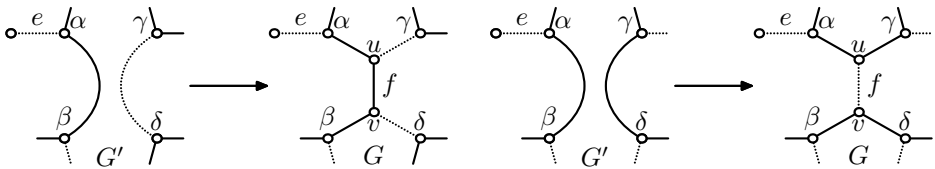

      \centerline{\hfill
      \includegraphics{reduction.2}\hfill\hfill
      \includegraphics{reduction.3}\hfill}
      \caption{Extending a perfect matching of $G'$ containing $e$ to a perfect matching of $G$ containing $e$. Dotted lines represent edges in $M$ or $M'$.}
      \label{figure reduction11}
\end{figure}

Suppose that for some edge $f$ at distance 2 from $e$ we have $f\notin F$. 
By the choice of $F$, $f$ is in some odd circuit $C_f$ in $\mathcal{O}$. This means that exactly one of $u\alpha$ and $u\gamma$, and exactly one of $v\beta$ and $v\delta$ belong to $C_f$. Without loss of generality, we may assume that $u\alpha\in F$ if and only if $v\beta \in F$ (otherwise, we rename $\beta$ and $\delta$). Let $G'$ be the graph obtained from $G$ after an $(\alpha\beta:\gamma\delta)_{uv}$-reduction. Let $F'$ be the $1^+$-factor of $G'$ as follows:

\[F' =
  \begin{cases}
  F\cup \{\alpha\beta\}\setminus \{u\alpha,v\beta\} & 
  $if $\{u\alpha,v\beta\}\subset F$,$\\
  F\cup \{\gamma\delta\}\setminus \{u\gamma,v\delta\} & 
  $otherwise.$
  \end{cases}\]

This is portrayed in Figure \ref{figure casei}.

\begin{figure}[ht]
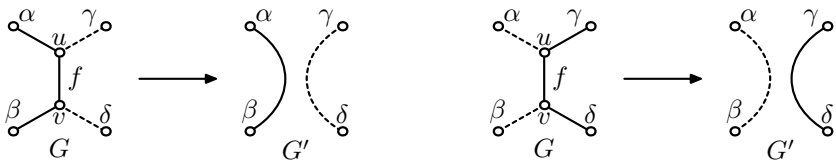

      \centering
      \hfill
      \includegraphics{reduction.4}\hfill\hfill
      \includegraphics{reduction.5}\hfill{}
      \caption{If $f\not\in F$, then we apply induction on $G'$ -- the graph obtained from $G$ after an $(\alpha\beta:\gamma\delta)_{uv}$-reduction. 
      Dashed lines represent edges in $F'$ or $F$. }
      \label{figure casei}
\end{figure}

Observe that every odd circuit $C\neq C_f$ remains the same in $G'$; and that $C_f$ is transformed into an odd circuit $C'_f$ in $G'$, shorter than $C_f$ by two edges.
Since $G'$ is bridgeless and its order is strictly less than $G$, it is not a counterexample. Let $M'$ be a perfect matching of $G'$ containing $e$ such that $F'$ and $M'$ are a quelling pair. We extend this perfect matching to a perfect matching $M$ of $G$ containing $e$ as described above (see Figure \ref{figure reduction11}), and claim that $F$ and $M$ are a quelling pair. Every odd circuit $C'\ne C'_f$ in $G'\setminus F'$ is hit by an edge of $M'$ in $G'$, and so the corresponding circuit $C$ is hit by the corresponding edge of $M$ in $G$. The odd circuit $C'_f$ is hit by an edge $M'$ in $G'$, and so the corresponding circuit $C_f$ is hit by the corresponding edge in $G$, unless the hitting edge is $\alpha\beta$ (or $\gamma\delta$), but then $C_f$ is hit by at least one edge (in fact, two edges) on the path from $\alpha$ to $\beta$ (or from $\gamma$ to $\delta$, respectively) which contains $u$ and $v$. Hence, $F$ and $M$ are a quelling pair of $G$, a contradiction. \hfill {\tiny$\blacksquare$}\\

From this point on we may assume that for every edge $f$ at distance 2 from $e$ we have $f\in F$. As a consequence, by the maximality of $F$, we have that all the edges at distance at most 1 from $e$ are in $F$, otherwise there would be a component in $\mathcal{O}$ which is not a circuit, a contradiction. Once again, let us consider an edge $f=uv$ at distance 2 from $e$. Let $\alpha$, $\beta$, $\gamma$ and $\delta$ be as defined in Claim 3. The edge $u\gamma$ is at distance 2 from $e$, and so $u\gamma\in F$, implying that $u\alpha\in F$ as well. Furthermore, it can be easily seen that $v\beta\in F$ if and only if $v\delta\in F$.

Consider first the case when $\{v\beta,v\delta\}\subset F$. Then, in the graph $G'$, obtained by an $(\alpha\beta:\gamma\delta)_{uv}$-reduction, the $1^+$-factor $F'=F \cup \{\alpha\beta,\gamma\delta\}\setminus\{u\alpha,v\beta,uv,u\gamma,v\delta\}$ can be extended to a quelling pair of $G'$ by a perfect matching $M'$ containing $e$. It is easy to see that the perfect matching $M$ (of $G$) containing $e$ obtained as an extension of $M'$ forms a quelling pair with $F$ in $G$, a contradiction. This implies that $v\beta$ and $v\delta$ do not belong to $F$. 

When $\{v\beta,v\delta\}\cap F = \emptyset$, we cannot use the reduction portrayed in Figure \ref{figure reduction1} as we do not have a guarantee that we can obtain a perfect matching $M$ intersecting the odd circuit in $\mathcal{O}$ containing the edges $v\beta$ and $v\delta$. Since $G$ is cyclically 4-edge-connected, this latter odd circuit is of length at least 5. Let $\delta, v, \beta, y,z $ be consecutive and distinct vertices on this odd circuit (see Figure \ref{figure reduction2}). Moreover, let $w$ and $x$ be the vertices in $G$ such that $\{w\beta, xy\}\subset F$. We proceed by applying an $(\alpha\beta:\gamma\delta)_{uv}$-reduction followed by an $(\alpha x:wz)_{\beta y}$-reduction as portrayed in Figure \ref{figure reduction2}.

\begin{figure}[ht]
      \centering
      \includegraphics{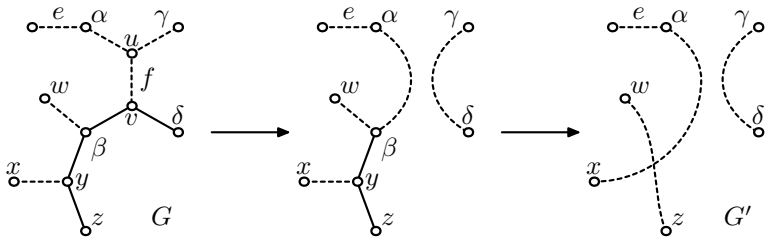}
      \caption{An $(\alpha\beta:\gamma\delta)_{uv}$-reduction followed by an $(\alpha x:wz)_{\beta y}$-reduction.}
      \label{figure reduction2}
\end{figure}

Let the resulting graph after these two reductions be denoted by $G'$. 
We remark that $G'$ may have parallel edges and even loops. Let $F'=F\cap E(G-\{u,v,\beta,y\})\cup \{\alpha x, wz,\gamma\delta\}$. 
Since $G'$ is obtained by applying twice the reduction at an edge at distance 2 from $e$, a perfect matching of $G'$ containing $e$ can always be extended to a perfect matching of $G$ containing $e$ (recall that $\beta y$ cannot be adjacent to $e$, otherwise $\beta y\in F$). 
Moreover, any such matching contains either the edge $\beta y$ or the edge $yz$, and so it hits the odd circuit in $\mathcal{O}$ containing the edges $v\beta$ and $v\delta$. Therefore, as long as $G'$ is bridgeless, by minimality of $G$, there exists a perfect matching $M'$ of $G'$ containing $e$ such that $F'$ and $M'$ are a quelling pair of $G'$. This extends to a perfect matching $M$ of $G$ containing $e$ such that $F$ and $M$ are a quelling pair of $G$. This contradicts our initial assumption that $G$ is a counterexample. Consequently, $G'$ must admit some bridge $g$.
Let $\Omega_{1}=\{\alpha, \gamma,\delta, w,x,z\}$ and let 
$\Omega_{2}=\{u,v,\beta, y\}$.

In order to prove that $G'$ is bridgeless, we prove that it is 2-edge-connected (which is a stronger property). Before considering an edge-cut of size one, we prove that the graph $G'$ is connected.
For, suppose that $G'$ is disconnected. Then $G'$ admits at least two components. Label the vertices of a first component by $A$, and the remaining vertices by $B$. This shall result in a labelling of the vertices of $G'$ such that each edge in $G'$ is monochromatic (an edge in $G'$ is said to be monochromatic if its endvertices have the same label). For the three edges $\alpha x$, $wz$, and $\gamma\delta$, either their endpoints receive all the same label, or two of them receive one label and the remaining four receive the other. We first extend this labelling of $V(G')$ to a partial labelling of $V(G)$ by giving to the vertices in $V(G)-\Omega_{2}$ the same label they had in $G'$.
Next, in both cases, we label all the vertices in $\Omega_2$ with the label appearing four or six times in $\Omega_1$, thus obtaining a labelling of $G$ such that there are at most two edges which are not monochromatic. However, this means that $G$ is either disconnected or contains a 2-edge-cut, a contradiction.

Therefore, $G'$ must be connected. We label the vertices of $G'\setminus \{g\}$ with labels $A$ and $B$ depending in which connected component of $G'\setminus\{g\}$ they belong to.
Consequently, $G'$ has exactly one edge which is not monochromatic: the bridge $g$. We consider different cases depending on the number of vertices in $\Omega_{1}$ labelled $A$ in $G'$, and show that, in each case, a bridge in $G'$ would imply that $G$ is not cyclically 4-edge-connected.  Without loss of generality, we shall assume that the number of vertices in $\Omega_{1}$ labelled $A$ is at least the number of vertices in $\Omega_{1}$ labelled $B$. We consider four cases.

\begin{itemize}

\item[(B0)] All the vertices in $\Omega_{1}$ are labelled $A$ in $G'$.

As above, we extend this labelling to a partial labelling of $G$, and then give label $A$ to all the vertices in $\Omega_{2}$. However, this means that $G$ has exactly one edge, corresponding to $g$, which is not monochromatic, a contradiction, since $G$ does not admit a bridge.
 
\item[(B1)] Exactly 5 vertices in $\Omega_{1}$ are labelled $A$ in $G'$.

This means that one of the edges in $\{\alpha x, wz, \gamma\delta\}$ is the bridge $g$. Once again, we extend this labelling to a partial labelling of $G$, and then give label $A$ to all the vertices in $\Omega_{2}$. However, this means that $G$ has an edge which has exactly one endvertex in $\Omega_{1}$ labelled $B$ and exactly one endvertex in $\Omega_{2}$ labelled $A$, implying that $G$ admits a bridge, a contradiction once again. 

\item[(B2)] Exactly 4 vertices in $\Omega_{1}$ are labelled $A$ in $G'$.

Since $G'$ has exactly one edge which is not monochromatic, the edges in $\{\alpha x, wz,\gamma\delta\}$ are all monochromatic.
As in the previous cases, we extend this labelling to a partial labelling of $V(G)$, and then give label $A$ to all the vertices in $\Omega_{2}$. However, this means that $G$ has exactly two edges each having exactly one endvertex in $\Omega_{1}$ labelled $B$ and exactly one endvertex in $\Omega_{2}$ labelled $A$. These two edges together with the edge in $G$ corresponding to the bridge $g$ form a 3-edge-cut in $G$. Since $G$ has no cyclic 3-edge-cuts, this cut must be trivial -- it separates a singleton from the rest of the graph. Therefore, one of the edges $\alpha x$, $wz$, or $\gamma\delta$ is a loop in $G'$, and so $\alpha=x$, $w=z$, or $\gamma=\delta$ in $G$. If $w=z$ or $\gamma=\delta$, then the vertices $w,\beta, y$, or $\gamma, u,v$ would induce a triangle in $G$, which is impossible since $G$ is cyclically 4-edge-connected. If $\alpha =x$, then the edges $\beta y$ and $yz$ (which are not in $F$) would be at distance at most 2 from $e$, a contradiction once again.

\item[(B3)] Exactly 3 vertices in $\Omega_{1}$ are labelled $A$ in $G'$.

Since $G'$ has exactly one edge which is not monochromatic, there is exactly one edge in $\{\alpha x, wz,\gamma\delta\}$ which is not monochromatic.
As in the preceding cases, we extend this labelling to a partial labelling of $V(G)$, and then give label $A$ to all the vertices in $\Omega_{2}$. However, this means that $G$ has exactly three edges each having exactly one endvertex in $\Omega_{1}$ labelled $B$ and exactly one endvertex in $\Omega_{2}$ labelled $A$, and so these three edges separate a singleton from the rest of the graph. In particular, it means that either $\alpha=x$, $w=z$ or $\gamma=\delta$, which is impossible as we have seen in the previous case.
\end{itemize}

Thus, $G'$ is bridgeless, a contradiction to our assumption.
\end{proof}

Here are some consequences of our main result.

\begin{corollary}
Let $G$ be a bridgeless cubic graph. For every parity subgraph $J$ of $G$ there exists a perfect matching $M$ such that $J$ and $M$ are a quelling pair.
\end{corollary}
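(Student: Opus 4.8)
The plan is to derive this directly from the main theorem, of which it is essentially a specialisation. First I would record the elementary observation that every parity subgraph $J$ of a cubic graph $G$ is in particular a $1^+$-factor in the sense used in the theorem: since each vertex of $G$ has degree $3$, which is odd, the defining parity condition forces every vertex of $J$ to have odd degree, hence degree $1$ or $3$; in particular every vertex has degree at least $1$, and $J$ is a spanning subgraph of $G$. (This is exactly the remark made just after the definition of a parity subgraph.) Thus the hypotheses of the main theorem are met with $F:=J$.

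Next, since $G$ is a bridgeless cubic graph it has at least two vertices and hence at least one edge, so we may choose an arbitrary edge $e\in E(G)$. Applying the main theorem to the $1^+$-factor $F=J$ and to this edge $e$ produces a perfect matching $M$ of $G$ (which happens to contain $e$, although this is not needed here) such that $G\setminus(J\cup M)$ is bipartite; that is, $J$ and $M$ form a quelling pair. This is precisely the statement of the corollary.

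There is no genuine obstacle: the corollary is obtained from the theorem simply by discarding the extra degree-$2$ restriction built into the notion of a parity subgraph and by not exploiting the freedom to prescribe the edge $e$. The only point meriting a line of verification is that $J$ really does satisfy the ``$1^+$-factor'' hypothesis of the theorem, and this follows immediately from the parity condition as noted above. The separate statement is still worth recording because parity subgraphs (joins) are a natural class of objects in this context, as mentioned in the reference to Kaiser and Raspaud earlier in the paper.
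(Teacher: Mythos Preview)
Your proposal is correct and matches the paper's intent: the corollary is stated without proof as an immediate consequence of the main theorem, and your argument---observing that a parity subgraph of a cubic graph is a $1^+$-factor and then invoking the theorem with an arbitrary edge $e$---is exactly the intended derivation.
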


\begin{corollary}
Let $G$ be a bridgeless cubic graph and let $\mathcal{O}$ be a collection of disjoint odd circuits of $G$. Then, there exists a perfect matching $M$ such that $M\cap E(C)\neq \emptyset$, for every $C\in\mathcal{O}$.
\end{corollary}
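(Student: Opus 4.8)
The plan is to derive this directly from the main theorem by applying it to a suitably chosen $1^{+}$-factor. The guiding observation is the following: if $F$ is a $1^{+}$-factor of $G$ with $F\cap E(C)=\emptyset$ for every $C\in\mathcal{O}$, and if $F$ and $M$ form a quelling pair, then $M$ necessarily intersects each $C\in\mathcal{O}$. Indeed, if $M\cap E(C)=\emptyset$ for some $C\in\mathcal{O}$, then $E(C)\subseteq E(G)\setminus(F\cup M)$, so the odd circuit $C$ would be a subgraph of the bipartite graph $G\setminus(F\cup M)$ -- a contradiction. Hence it suffices to exhibit a single $1^{+}$-factor of $G$ disjoint from the edge sets of all the circuits in $\mathcal{O}$, and then to invoke the main theorem with this $F$ and an arbitrary edge $e\in E(G)$.

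To construct such an $F$, I would proceed as follows. Let $W=\bigcup_{C\in\mathcal{O}}V(C)$. Since the circuits in $\mathcal{O}$ are pairwise disjoint and $G$ is cubic, each vertex $v\in W$ lies on a unique circuit of $\mathcal{O}$ and is incident to exactly one edge $e_{v}$ that does not belong to that circuit. Set $F_{1}=\{e_{v}:v\in W\}$. Then $F_{1}$ covers every vertex of $W$, and, because each $e_{v}$ joins $v$ to a vertex not lying on the circuit through $v$, no edge of $F_{1}$ belongs to any $C\in\mathcal{O}$. Now let $U$ be the set of vertices of $G$ not covered by $F_{1}$; then $U\subseteq V(G)\setminus W$, and each $u\in U$ is incident to some edge with both endvertices outside $W$ (if all three edges at $u$ led into $W$, then $u$ would already be covered by $F_{1}$). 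Choosing one such edge $g_{u}$ for every $u\in U$ and putting $F=F_{1}\cup\{g_{u}:u\in U\}$ yields a spanning subgraph of minimum degree at least $1$, i.e.\ a $1^{+}$-factor of $G$, which is still disjoint from $E(C)$ for every $C\in\mathcal{O}$.

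With $F$ fixed, the main theorem provides a perfect matching $M$ (containing any prescribed edge $e$) such that $F$ and $M$ are a quelling pair, and by the observation above $M$ meets every circuit of $\mathcal{O}$, as required. Since essentially all the work is carried by the main theorem, there is no genuine obstacle here; the only point requiring a little care is the elementary incidence bookkeeping showing that the set $F$ constructed above is simultaneously a $1^{+}$-factor of $G$ and disjoint from all the circuit edges. I remark that the equivalence discussed in Section~\ref{section odd} offers an alternative derivation through vertex-to-triangle expansions, but the argument above is more transparent now that the main theorem is available.
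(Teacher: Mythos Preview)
Your derivation is correct and is precisely the direct route the paper has in mind by listing this as an immediate corollary of the main theorem (the triangle-expansion argument in Section~\ref{section odd} was only needed before the main theorem was available, as you note). One tiny imprecision: the clause ``each $e_{v}$ joins $v$ to a vertex not lying on the circuit through $v$'' need not hold if $e_{v}$ happens to be a chord of that circuit; the conclusion $e_{v}\notin\bigcup_{C\in\mathcal{O}}E(C)$ is nevertheless correct, since $e_{v}\notin E(C_{v})$ by definition and $v\notin V(C')$ for every $C'\neq C_{v}$ by vertex-disjointness.
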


\begin{corollary}
Every bridgeless cubic graph admits an $S_{4}$-pair.
\end{corollary}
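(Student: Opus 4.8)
The plan is to read this off directly from the main theorem, with essentially no extra work. First I would invoke Petersen's theorem (cited in the introduction) to fix a perfect matching $F$ of the bridgeless cubic graph $G$. Since every perfect matching is in particular a $1^+$-factor — all of its vertices have degree exactly $1$ — the main theorem applies to $G$ with this $F$ and with $e$ chosen to be any edge of $G$ whatsoever; the choice of $e$ is irrelevant here. It delivers a perfect matching $M$ of $G$ such that $F$ and $M$ form a quelling pair, i.e. $G\setminus(F\cup M)$ is bipartite. Because $F$ is itself a perfect matching, the pair $(F,M)$ is exactly what is called an $S_4$-pair, and we are done. (Equivalently, one can cite the corollary on parity subgraphs instead: in a cubic graph every vertex has odd degree, so a perfect matching is a parity subgraph, and that corollary then supplies the partner matching $M$.)

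The one piece of bookkeeping worth spelling out is that the two matchings of an $S_4$-pair are genuinely distinct when this matters. If $G$ is $3$-edge-colourable the statement is trivial anyway — take two of the three colour classes — so we may assume $G$ is a snark; then $G\setminus F$ is a $2$-factor containing an odd circuit, hence not bipartite, so $M=F$ is impossible and $(F,M)$ is an honest pair of two distinct perfect matchings.

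There is no real obstacle in this corollary: all of the difficulty lives in the main theorem (the reductions across $2$-edge-cuts and cyclic $3$-edge-cuts, and the double $(\alpha\beta:\gamma\delta)_{uv}$-reduction used to handle the edges at distance $2$ from $e$). The only thing to be careful about is to phrase the deduction so that it is clear a perfect matching legitimately counts as a $1^+$-factor, which is immediate from the definitions given in the paper.
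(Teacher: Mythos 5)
Your deduction is correct and is exactly how the paper obtains this corollary from the main theorem: take $F$ to be any perfect matching (Petersen), note it is a $1^+$-factor, and apply the theorem to get the partner matching $M$. The extra remark about distinctness of $F$ and $M$ is harmless but not needed, since the conjecture does not require the two perfect matchings to be distinct.
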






\end{document}